\numberwithin{equation}{section}
\newtheorem{theorem}{Theorem}[section]
\newtheorem{remark}[theorem]{Remark}
\newtheorem{lemma}[theorem]{Lemma}
\newtheorem{proposition}[theorem]{Proposition}
\newcommand{\dk}{\mathbf{k}}
\title[Long-time dynamics of weakly nonlinear CGL]{ Long-time Dynamics of   Resonant Weakly Nonlinear   CGL Equations }
\author{HUANG Guan\\C.M.L.S, \'Ecole Polytechnique}
\begin{document}
\maketitle
\begin{abstract}
Consider a weakly nonlinear CGL equation on the torus~$\mathbb{T}^d$:
\[u_t+i\Delta u=\epsilon [\mu(-1)^{m-1}\Delta^{m} u+b|u|^{2p}u+ ic|u|^{2q}u].\eqno{(*)}\]
Here $u=u(t,x)$, $x\in\mathbb{T}^d$, $0<\epsilon<<1$, $\mu\geqslant0$, $b,c\in\mathbb{R}$ and $m,p,q\in\mathbb{N}$. Define \mbox{$I(u)=(I_{\dk},\dk\in\mathbb{Z}^d)$},  where $I_{\dk}=v_{\dk}\bar{v}_{\dk}/2$ and  $v_{\dk}$, $\dk\in\mathbb{Z}^d$, are the Fourier coefficients of the function~$u$ we give. Assume that  the equation $(*)$  is well posed on time intervals of order $\epsilon^{-1}$ and its solutions have there  a-priori bounds, independent of the small parameter. Let $u(t,x)$ solve the equation $(*)$.  If $\epsilon$ is small enough, then for $t\lesssim\epsilon^{-1}$,  the quantity $I(u(t,x))$ can be well described by solutions of an {\it effective equation}:
\[u_t=\epsilon[\mu(-1)^{m-1}\Delta^m u+ F(u)],\]
where the term $F(u)$ can be constructed through a kind of resonant  averaging of the nonlinearity $b|u|^{2p}+ ic|u|^{2q}u$.
\end{abstract}
\setcounter{section}{-1}
\bibliographystyle{plain}
\section{Introduction}
We consider a weakly nonlinear complex Ginzburg-Landau (CGL) equation on the \mbox{$d$-torus} \mbox{$\mathbb{T}^d=\mathbb{R}^d/2\pi l\mathbb{Z}^d$}:
\begin{equation}
u_t+i\Delta u= \epsilon[\mu(-1)^{m-1}\Delta^m u+b|u|^{2p}u+ ic |u|^{2q}u], \quad u=u(t,x),\; x\in\mathbb{T}^d,
\label{rnls1}
\end{equation}
where $m,p,q\in\mathbb{N}$, $\mu\geqslant0$, $b,c\in\mathbb{R}$ and $\epsilon$ is a small parameter.   The {\it Large box limit} of the 2d cubic  NLS equation, which is   Eq. (\ref{rnls1}) with $\mu=b=0$, $q=2$ and $d=2$,   was studied in \cite{fgh2013}.  There, by simultaneously letting  $l\to\infty$ and $\epsilon\to0$  with the relation $l\ll\epsilon^{-1}$ satisfied,  an equation that describes the large box limit  was derived. While the existence of this double limit is quite restricted to the case of 2d cubic NLS, the  weakly nonlinear limit, derived by letting $\epsilon\to0$ with constant~$l$, may exist for many nonlinear PDEs.  In this work\footnote{The result of the present paper is part of the Ph.D works of the author at \'Ecole Polytechnique, France} we will calculate  the weakly nonlinear limit for  the CGL equation (\ref{rnls1}). We  mention that our approach as well applies to study weakly nonlinear wave equations.

For simplicity we  fix the period of the torus $\mathbb{T}^d$ to be 1. For any $s\in\mathbb{R}$ denote by $H^s$ the Sobolev space of complex-valued periodic functions, provided with the norm $||\cdot||_s$,
\[||u||_s^2=\big\langle (-\Delta )^s u,u\big\rangle+\langle u, u\rangle\quad \text{if}\quad s\in\mathbb{N}
\]
where $\langle \cdot,\cdot\rangle$ is the real scalar product in $L^2(\mathbb{T}^d)$,
\[\langle u,v\rangle=Re\int_{\mathbb{T}^d}u\bar{v}dx,\quad u,v\in L^2(\mathbb{T}^d).
\]
When $\mu=b=0$,  Eq. (\ref{rnls1}) is hamiltonian and may be written as 
\[u_t=i\partial_u \mathcal{H}(u),\quad \mathcal{H}(u)=\int_{\mathbb{T}^d}\big[\frac{1}{2}|\nabla u|^2+ \frac{c\epsilon }{2q+2}|u|^{2q+2}\big]dx,\]
where $\partial_u$ stands for the $L^2$-gradient with respect to $u$.

 We  mainly study the long-time dynamics of Eq.  (\ref{rnls1}) on time intervals of order $\epsilon^{-1}$ when  $0<\epsilon\ll1$. We assume:
\smallskip

\noindent {\bf Assumption A}: {\it There exist some $s>d/2$ and $T>0$ such that for every $u_0\in H^s$, Eq. (\ref{rnls1}) has a unique solution $u(t,x)\in C([0,\epsilon^{-1}T], H^s)$ with initial datum $u_0$, and $||u(t,x)||_s\leqslant C(||u_0||_s, T)$ for $t\leqslant \epsilon^{-1}T$. }
\medskip

Many sufficient conditions for this assumption are known. For example:
\begin{proposition}1)  (See \cite{brezis1980,bourgain1993,herr2011}) If $\mu=b=0,$ then
 Assumption A holds  for
\begin{equation}q\in \mathbb{N}, \; q<+\infty, \; \text{when}\;\; d=1,2 \quad \text{and}\quad q=1, 2,\;\text{when}\;\;d=3.\label{rnls-p1-r0}\end{equation}
2) If $\mu>0$, $b\leqslant0$ and $c\leqslant0$, then for any fixed $p,q,d\in\mathbb{N}$, there exists $m\in\mathbb{N}$ such that Assumption A holds.
\label{rnls-p1}\end{proposition}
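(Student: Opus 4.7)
The plan is to read Assumption~A as the combination of (a) local well-posedness of \eqref{rnls1} in~$H^s$, $s>d/2$, and (b) an a priori bound on $\|u(t)\|_s$ valid up to $t=\epsilon^{-1}T$. Local well-posedness is uniform across both parts: since $H^s$ is a Banach algebra for $s>d/2$, the nonlinearities $|u|^{2p}u$, $|u|^{2q}u$ are locally Lipschitz on bounded sets, and a Duhamel contraction around the Schr\"odinger group $e^{-it\Delta}$ produces a unique local solution on an interval whose length depends only on $\|u_0\|_s$. The real task in each part is to produce the a priori bound on $[0,\epsilon^{-1}T]$.

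For Part~(1), with $\mu=b=0$ equation \eqref{rnls1} reduces to the semilinear NLS $u_t+i\Delta u=i\epsilon c|u|^{2q}u$, which conserves the mass $\|u\|_{L^2}^2$ and the Hamiltonian $\mathcal H(u)=\tfrac12\|\nabla u\|_{L^2}^2+\tfrac{c\epsilon}{2q+2}\|u\|_{L^{2q+2}}^{2q+2}$. Combining the two via a Gagliardo--Nirenberg inequality yields an a priori $H^1$ bound in each admissible regime: the energy-subcritical cases $d\le2$ with any $q\in\mathbb{N}$ and $d=3$ with $q=1$ are classical and can be found in \cite{brezis1980,bourgain1993}, while the energy-critical quintic case $d=3,\,q=2$ on $\mathbb{T}^3$ is precisely the content of \cite{herr2011}. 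From an $H^1$ bound, higher Sobolev bounds follow by standard commutator estimates: applying $(-\Delta)^{s/2}$ to the equation and using Kato--Ponce / Moser-type inequalities gives $\frac{d}{dt}\|u\|_s^2\le\epsilon\, C(\|u\|_{H^1})\,\|u\|_s^2$, so Gronwall on $[0,\epsilon^{-1}T]$ produces $\|u(t)\|_s\le e^{CT}\|u_0\|_s$, independent of~$\epsilon$.

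For Part~(2), the dissipation $\mu(-1)^{m-1}\Delta^m u$ drives the estimate. Pairing \eqref{rnls1} with $(-\Delta)^s\bar u$ and taking the real part, the Schr\"odinger term drops out as skew-adjoint; the dissipation contributes the coercive quantity $-\epsilon\mu\|u\|_{s+m}^2$ (plus a harmless lower-order piece from the low-frequency part of $\|\cdot\|_s$); the term $b|u|^{2p}u$ contributes, after redistributing derivatives by integration by parts, the manifestly non-positive diagonal piece $\epsilon b\int|u|^{2p}|(-\Delta)^{s/2}u|^2$ plus commutator remainders; and $ic|u|^{2q}u$ contributes only commutators, since its diagonal piece is purely imaginary. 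The remainders are multilinear in derivatives of $u$ of order at most $s$ and, by H\"older together with Gagliardo--Nirenberg, can be bounded by $C\|u\|_{s+m}^{\theta}\|u\|_s^{\sigma}$ with some $\theta<2$, provided $m$ is chosen large enough in terms of $s,p,q,d$. Young's inequality then absorbs them into $\tfrac{\epsilon\mu}{2}\|u\|_{s+m}^2$, producing the closed inequality $\frac{d}{dt}\|u\|_s^2+\epsilon\mu\|u\|_{s+m}^2\le\epsilon K(\|u\|_s)$, from which the desired a priori bound follows on any time interval.

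The only genuinely difficult step is the $H^1$-critical case $d=3,\,q=2$ of Part~(1), whose global well-posedness on the torus is a deep bilinear-Strichartz result supplied by \cite{herr2011}; the remainder of the proof is either textbook energy theory or, in Part~(2), a bookkeeping exercise with Gagliardo--Nirenberg exponents to produce a working threshold $m=m(s,p,q,d)$.
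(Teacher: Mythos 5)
Your Part (2) follows essentially the paper's own route: a hierarchy of energy estimates in which the sign conditions on $b,c$ give the $L^2$ bound and (via monotonicity of the Hamiltonian of the conservative part) the $H^1$ bound for free, after which the higher norms are closed by absorbing the nonlinear remainders into the hyperviscous term $-\epsilon\mu\|u\|_{s+m}^2$ using H\"older, Sobolev embedding, interpolation and Young, with $m$ chosen large enough for the embeddings to hold. That part is sound and matches the paper.

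The genuine gap is in Part (1), at the step ``$\frac{d}{dt}\|u\|_s^2\le \epsilon\,C(\|u\|_{H^1})\,\|u\|_s^2$ by Kato--Ponce/Moser estimates.'' The Moser-type bound for $\||u|^{2q}u\|_{H^s}$ carries a constant depending on $\|u\|_{L^\infty}^{2q}$, not on $\|u\|_{H^1}$, and $H^1(\mathbb{T}^d)\not\hookrightarrow L^\infty$ for $d=2,3$ --- that is, for every case of Part (1) except $d=1$. Interpolating $\|u\|_{L^\infty}\lesssim\|u\|_{H^1}^{1-\theta}\|u\|_{H^s}^{\theta}$ instead yields a superlinear Gronwall inequality whose solution need not survive until $\tau=\epsilon t=T$. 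This is precisely why the paper, for $d=2$, $q=1$, invokes the Brezis--Gallou\"et logarithmic inequality $\|u\|_{L^\infty}\le C(1+\sqrt{\log(1+\|u\|_2)})$ together with Moser's inequality, obtaining $\frac{d}{dt}\|u\|_2\lesssim\epsilon\|u\|_2\,(1+\log(1+\|u\|_2))$ and hence the \emph{double}-exponential bound $\|u(t)\|_2\le\|u(0)\|_2e^{C_1e^{C_2\epsilon t}}$ (still bounded in terms of $T$ for $t\le\epsilon^{-1}T$, but not obtainable from the linear Gronwall you describe); and why, for the remaining cases, it quotes from \cite{bourgain1993,herr2011} the increment estimate $\|u(t)\|_2^2\le(1+\epsilon T_1C_1)\|u(t_0)\|_2^2$ on intervals of length $T_1$ depending only on the $H^1$ bound. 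So the cited references are doing real work in \emph{all} the $d\ge2$ cases, not only the critical case $d=3$, $q=2$. A smaller omission: for the focusing sign of $c$ and $L^2$-supercritical $q$ (e.g.\ $d=2$, $q\ge2$), even the $H^1$ bound requires the smallness of $\epsilon$, as the paper's continuity argument leading to (\ref{rnls-p1-bh1}) makes explicit.
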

It is  convenient to introduce   the slow time $\tau=\epsilon t$. 
Writing Eq. (\ref{rnls1}) in~$\tau$, we get the rescaled equation
\begin{equation}
\dot{u}+\epsilon^{-1}i\Delta u=\mu(-1)^{m-1}\Delta^m u+b|u|^{2p}u+ ic |u|^{2q}u,
\label{rnls2}
\end{equation}
where $u=u(\tau,x)$, $x\in\mathbb{T}^d$ and the dot $\dot{}$ stands for $\frac{d}{d\tau}$.

For  a complex function $u(x)$ on $\mathbb{T}^d$ we define
\begin{equation}\mathcal{F}(u)=(v_{\dk},\;\dk\in\mathbb{Z}^d),
\label{Fourier}
\end{equation}
 where the vector $(v_{\dk},\dk\in\mathbb{Z}^d)$ is formed by the Fourier coefficients of $u$:  $$u(x)=\sum_{\dk\in\mathbb{Z}^d}v_{\bf{k}}e^{i\dk\cdot x},\quad v_{\dk}=\int_{\mathbb{T}^d}u(x)e^{-i\dk\cdot x}dx.$$ 
In the space of complex sequence $v=(v_{\dk},\dk\in\mathbb{Z}^d)$, we introduce the norm:
$$ |v|^2_s=\sum_{\dk\in\mathbb{Z}^d}(|\dk|^{2s}+ 1)|v_{\dk}|^2,\quad s\in\mathbb{R},$$
and denote $h^s=\{v:\; |v|_s<\infty\}$. Obviously, for $s\in\mathbb{R}$, $h^s=\mathcal{F}(H^s)$.

Eq. (\ref{rnls2}) has a rather transparent form in the space $h^s$. Let $u(\tau,x)$ be its solutions, then the Fourier coefficients $v_{\dk}(\tau)$ of $u(\tau,x)$ solves the infinite dimensional  system:
\begin{equation}
\dot{v}_{\dk}-\epsilon^{-1}i\lambda_{\dk}v_{\dk}=-\mu\lambda_{\dk}^mv_{\dk}+bP_{\dk}(v,p)+ icP_{\dk}(v,q), 
\label{rnls-v}
\end{equation}
where $\dk\in\mathbb{Z}^d$, $\lambda_{\dk}=|\dk|^2$ and for every $n\in\mathbb{N}$,
$$P_{\dk}(v,n)=\sum_{(\dk_1,\dots,\dk_{2n+1})\in \mathcal{S}(\dk,n)}v_{\dk_1}\bar{v}_{\dk_2}\cdots v_{\dk_{2n-1}}\bar{v}_{\dk_{2n}}v_{\dk_{2n+1}}$$ with $$\mathcal{S}(\dk,n)=\{(\dk_1,\dots,\dk_{2n+1})\in(\mathbb{Z}^d)^{2n+1}:\;\sum_{j=1}^{2n+1}(-1)^{j-1}\dk_j=\dk\}.$$
We denote $\Lambda=(\lambda_{\dk},\dk\in\mathbb{Z}^d)$ and  call it the {\it frequency vector} of Eq. (\ref{rnls2}).

For every $\dk\in\mathbb{Z}^d$, denote $I_{\dk}=\frac{1}{2}v_{\dk}\bar{v}_{\dk}$ and $\varphi_{\dk}=\text{Arg}\; v_{\dk}$. Notice that the quantities~$I_{\dk}$ are conservation laws of the linear equation $(\ref{rnls1})_{\epsilon=0}$. We call them the action variables and correspondingly, call the quantities $\varphi_{\dk}$ the angle variables. 
We introduce the weighted $l^1$-space $h_I^s$:
$$h_I^s:=\{I=(I_{\dk},\dk\in\mathbb{Z}^d)\in \mathbb{R}^{\infty}: |I|_s^{\sim}=\sum_{\dk\in\mathbb{Z}^d}2(|\dk|^{2s}+ 1)|I_{\dk}|<\infty\}.$$

 Using the action-angle variables $(I,\varphi)$, we can write equation (\ref{rnls-v}) as a slow-fast system:
\[
\dot{I}_{\dk}=v_{\dk}\cdot [-\mu\lambda_{\dk}^mv_{\dk}+P_{\dk}(v,p)+iP_{\dk}(v,q)],\quad \dot{\varphi}_{\dk}=\epsilon^{-1}\lambda_{\dk}+|v_{\dk}|^{-2}\cdots,\quad \dk\in\mathbb{Z}^d.
\]
Here  the dots stand for a term of order 1 (as $\epsilon\to0$).
Our task is  to study the evolution of actions $I_{\dk}$ when $\epsilon\ll 1$.  Due to the polynomial form of the nonlinearity,  there exists an effective way to deal with this problem. That is  the so-called {\it interaction representation picture}.   Let us define
\[a_{\dk}(\tau)=e^{-i\epsilon^{-1}\lambda_{\dk}\tau}v_{\dk}(\tau).\]
Clearly, $|a_{\dk}|^2=|v_{\dk}|^2=I_{\dk}/2$. Therefore the limiting behaviour (as $\epsilon\to0$) of the quantity $|a_{\dk}|$
characterizes the limiting behaviour of the action variables  $I_{\dk}$. Using Eq. (\ref{rnls-v}), we obtain the equation satisfied by $a_{\dk}(\tau)$:
\[\begin{split}
\dot{a}_{\dk}(\tau)=&-\mu\lambda_{\dk}^ma_{\dk} \\
&+b\sum_{(\dk_{1},\dots,\dk_{2p+1})\in S(\dk,p)}a_{\dk_1}(\tau)\overline{a_{\dk_2}(\tau)}\cdots a_{\dk_{2p-1}}(\tau)\overline{a_{\dk_{2p}}(\tau)}a_{\dk_{2p+1}}(\tau)\\
&\qquad\qquad\qquad\times \exp\{i\epsilon^{-1}\tau[-\lambda_{\dk}+\sum_{j=1}^{2p+1}(-1)^{j-1}\lambda_{\dk_{j}}]\}\\
&+ci\sum_{(\dk_{1},\dots,\dk_{2q+1})\in S(\dk,q)}a_{\dk_1}(\tau)\overline{a_{\dk_2}(\tau)}\cdots a_{\dk_{2q-1}}(\tau)\overline{a_{\dk_{2q}}(\tau)}a_{\dk_{2q+1}}(\tau)\\
&\qquad\qquad\qquad\times \exp\{i\epsilon^{-1}\tau[-\lambda_{\dk}+\sum_{j=1}^{2q+1}(-1)^{j-1}\lambda_{\dk_{j}}]\},
\end{split}
\]
where $\dk\in\mathbb{Z}^d$. The terms in the right hand side oscillate  fast if $\epsilon $ is  small, except the terms for which  the sum in the exponential equals zero. This leads to the guess that only these terms determine the limiting  behavior of $a_{\dk}(\tau)$ as $\epsilon\to0$, and that the resulting dynamics is controlled by the following system: for $\dk\in\mathbb{Z}^d,$
\begin{equation}
\dot{a}_{\dk}(\tau)=-\mu\lambda_{\dk}^ma_{\dk}+b\mathbf{R}_{\dk}(a,p)+ci \mathbf{R}_{\dk}(a,q),
\label{rnls-effective2}
\end{equation}
where    for every $n\in\mathbb{N}$,
$$\mathbf{R}_{\dk}(a,n)=\sum_{(\dk_{1},\dots,\dk_{2n+1})\in \mathcal{R}(\dk,n)}a_{\dk_1}(\tau)\overline{a_{\dk_2}(\tau)}\cdots a_{\dk_{2n-1}}(\tau)\overline{a_{\dk_{2n}}(\tau)}a_{\dk_{2n+1}}(\tau),$$
with
$$\mathcal{R}(\dk,n):=\{(\dk_1,\dots,\dk_{2n+1})\in S(\dk,n):\; -\lambda_{\dk}+\sum_{j=1}^{2n+1}(-1)^{j-1}\lambda_{\dk_{j}}=0\;\}.$$  We call Eq.  (\ref{rnls-effective2}) the {\it effective equation} for Eq.  (\ref{rnls2}).
We will see  in Section 1 that  it can be defined by an averaging process and is well posed in the spaces $h^s$, $s>d/2$. 

When $\mu=b=0$, Eq. (\ref{rnls-effective2})  is hamiltonian  with the Hamiltonian function:
\[\mathcal{H}_{res}(v)=\frac{c}{2q+2}\sum_{(\dk_1,\dots,\dk_{2q+2})\in \mathcal{RES}}v_{\dk_1}\bar{v}_{\dk_2}\cdots v_{\dk_{2q+1}}\bar{v}_{\dk_{2q+2}},
\]
where  $$\mathcal{RES}:=\{(\dk_1,\dots,\dk_{2q+2})\in (\mathbb{Z}^d)^{2q+2}: \sum_{j=1}^{2q+2}(-1)^{j-1}\lambda_{\dk_j}=0\}.$$
 Besides the Hamiltonian $\mathcal{H}_{res}$,  this equation possess two  extra  integrals of motion:
$$H_1(v)=\sum_{\dk\in\mathbb{Z}^d}|v_{\dk}|^2,\quad H_2(v)=\sum_{\dk\in\mathbb{Z}^d}\lambda_{\dk}|v_{\dk}|^2.$$

The main result of this work is the following theorem, where $u(t,x)$ is  a solution of Eq. (\ref{rnls1}), $v(\tau)=\mathcal{F}(u(\epsilon^{-1}\tau,x))$ and $a^{\prime}(\tau)$ is a solution of the effective equation~(\ref{rnls-effective2}) with the same initial datum $v(0)$.
\begin{theorem}\label{rnls-main}If Assumption A holds, then the  solution $a'(\tau)$  exists for $0\leqslant\tau\leqslant T$, and   for  every $s_1\in(d/2,s)$ and sufficiently small  $\epsilon$ we have
\begin{equation}|I(v(\tau))-I(a^{\prime}(\tau))|_{s_1}^{\sim}\leqslant C[\epsilon^{\frac{s-s_1}{2m}}+\epsilon^{1/2}],\quad \tau\in [0,T],\label{rnls-main-e}\end{equation}
where the constant $C$ depend only on $s_1$,$s$, $T$ and  the size of the initial datum~$|v(0)|_s$.
\end{theorem}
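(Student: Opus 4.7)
I would prove this in four stages, organized around the interaction representation and a truncation/averaging argument.

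First, I pass to the interaction variables $a_{\dk}(\tau)=e^{-i\epsilon^{-1}\lambda_{\dk}\tau}v_{\dk}(\tau)$, so that $I_{\dk}=\tfrac12|a_{\dk}|^2$, and it suffices to compare $a(\tau)$ with the solution $a'(\tau)$ of the effective equation (\ref{rnls-effective2}) having the same initial datum. The equation for $a$ derived in the excerpt decomposes naturally as
\[
\dot a_{\dk}=-\mu\lambda_{\dk}^m a_{\dk}+b\mathbf{R}_{\dk}(a,p)+ci\mathbf{R}_{\dk}(a,q)+\Xi_{\dk}(a,\tau),
\]
where $\Xi_{\dk}$ is a sum, over the non-resonant indices $\mathcal{S}(\dk,n)\setminus\mathcal{R}(\dk,n)$ ($n=p,q$), of monomials in $a_{\dk_j}$ multiplied by $e^{i\epsilon^{-1}\tau\omega}$ with $\omega=-\lambda_{\dk}+\sum_j(-1)^{j-1}\lambda_{\dk_j}$. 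Because all $\lambda_{\dk}$ are non-negative integers, every non-resonant phase satisfies $|\omega|\ge 1$. Before starting the comparison I would establish, by the same contraction/continuation argument used for (\ref{rnls1}) (the effective nonlinearity being a restriction of the original one, hence of the same polynomial type and having the same real/imaginary dissipative structure when $\mu=b=0$ or $\mu>0,\ b,c\le 0$), that $a'\in C([0,T];h^s)$ with a bound depending only on $|v(0)|_s$ and $T$.

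Next I would exploit the oscillations in $\Xi_{\dk}$ by integration by parts in time. For each non-resonant monomial $e^{i\epsilon^{-1}\sigma\omega}G(\sigma)$ appearing in the integrated equation,
\[
\int_0^{\tau}e^{i\epsilon^{-1}\sigma\omega}G(\sigma)\,d\sigma=\frac{\epsilon}{i\omega}\Big[e^{i\epsilon^{-1}\tau\omega}G(\tau)-G(0)\Big]-\frac{\epsilon}{i\omega}\int_0^{\tau}e^{i\epsilon^{-1}\sigma\omega}\dot G(\sigma)\,d\sigma,
\]
so each term contributes $O(\epsilon)$ times a norm of $G$ plus $\dot G$. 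The derivative $\dot G$ is computed through the equation for $a$, producing both a linear factor $\mu\lambda_{\dk_j}^m\lesssim|\dk_j|^{2m}$ from the semigroup and polynomial nonlinear contributions controlled by $|a|_s$.

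The core step is truncation at a scale $N$. Split the action difference as $|I(v)-I(a')|_{s_1}^{\sim}\le\Sigma_{|\dk|\le N}+\Sigma_{|\dk|>N}$. The high-mode tail is estimated by the a priori $h^s$ bound:
\[
\sum_{|\dk|>N}(|\dk|^{2s_1}+1)|a_{\dk}|^2\;\le\; N^{-2(s-s_1)}\,|a|_s^{\,2}\;\lesssim\; N^{-2(s-s_1)},
\]
and likewise for $a'$. On the low-mode subsystem, the integration by parts above involves only finitely many non-resonant phases (polynomially many in $N$), each with $|\omega|\ge 1$, while $\dot G$ is bounded by $C(|a|_s)\cdot N^{2m}$ using the truncation. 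Inserting these into the Duhamel identity for $w=\Pi_N(a-a')$ and absorbing the Lipschitz contribution of $\mathbf{R}_{\dk}(a,n)-\mathbf{R}_{\dk}(a',n)$ (which is Lipschitz in $h^{s_1}$ since $s_1>d/2$) by Gronwall, I obtain an averaging error of order $\epsilon^{1/2}$ on the low modes, independent of $N$, plus lower-order terms of the form $\epsilon\cdot\mathrm{poly}(N)$ that are subsumed once $N\sim\epsilon^{-1/(2m)}$ is chosen to equate the two dominant contributions $N^{-(s-s_1)}$ and $\epsilon^{(s-s_1)/(2m)}$.

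The main obstacle is the third step: the integration-by-parts identity feeds $\dot a$ back into the estimate, so one has to close a Gronwall inequality while carrying both a high-regularity $h^s$ norm (for the tail and for dominating $\dot G$ via the $N^{2m}$ factor) and a lower-regularity $h^{s_1}$ norm (in which the nonlinearity is actually Lipschitz). I expect the cleanest route is to perform all integration by parts once at regularity $h^{s_1}$, quantify the resulting boundary and remainder terms using the uniform $h^s$ a priori bound given by Assumption A (and its analogue for $a'$), and only then invoke Gronwall to propagate the inequality across $[0,T]$. The combinatorial proliferation of non-resonant monomials is controlled by the same Sobolev multiplication estimates that give well-posedness of (\ref{rnls-v}) in $h^{s_1}$, so the constants remain harmless.
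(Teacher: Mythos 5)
Your overall architecture (interaction representation, resonant/non-resonant splitting with $|\omega|\geqslant 1$, Duhamel plus Gronwall) matches the paper's, but the step you identify as the core — killing the non-resonant terms — contains a genuine gap. You integrate by parts in time, which forces you to estimate $\dot G$, and $\dot G$ contains the factor $\mu\lambda_{\dk_j}^m a_{\dk_j}$ coming from the dissipative part of the equation for each inner index $\dk_j$. Truncating the \emph{output} mode $|\dk|\leqslant N$ does not confine the inner indices: for fixed $\dk$ the set $\mathcal{S}(\dk,n)$ is infinite and the $\dk_j$ range over all of $\mathbb{Z}^d$, so neither the claim that only polynomially many phases occur nor the bound $\dot G\lesssim N^{2m}$ is justified without also projecting the solution itself, which creates further commutator errors. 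Even granting the bound $\epsilon N^{2m}$ for the remainder, your bookkeeping is inconsistent: you assert an averaging error of order $\epsilon^{1/2}$ ``independent of $N$'' and then choose $N\sim\epsilon^{-1/(2m)}$, for which $\epsilon N^{2m}=O(1)$ and the estimate collapses; optimizing $N$ honestly against the tail $N^{-(s-s_1)}$ yields the exponent $(s-s_1)/(2m+s-s_1)$, which is strictly weaker than the stated $\min\{(s-s_1)/(2m),\,1/2\}$ when $s-s_1<m$. A secondary gap: you assert a priori that $a'$ exists on $[0,T]$ with bounds inherited from the ``dissipative structure'' of the effective equation; this is not proved (higher Sobolev norms of the resonant system are not obviously controlled), and the paper instead gets existence up to time $T$ only a posteriori, by a bootstrap off the comparison with the a priori bounded $a(\tau)$.

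The paper's key lemma avoids differentiating the monomials altogether. It subdivides $[0,1]$ into intervals of length $T_0$, freezes $a$ at the left endpoint of each, integrates the oscillatory exponential exactly against the frozen coefficients (gaining $\epsilon/|\omega|\leqslant\epsilon$ per interval), and pays for the freezing only through the H\"older continuity of $\tau\mapsto a(\tau)$ in $h^{s_1}$, which costs $T_0^{(s-s_1)/m}+T_0$ by the parabolic smoothing estimates for $e^{\mathbb{F}\tau}$; balancing with $T_0=\epsilon^{1/2}$ gives $\epsilon^{(s-s_1)/m}+\epsilon^{1/2}$, and the final factor $\epsilon^{(s-s_1)/(2m)}$ appears only afterwards, when $\mathbb{F}e^{\mathbb{F}(\tilde\tau-\tau)}$ acts on the accumulated integral in the Duhamel formula. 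If you want to salvage your route, you should replace the integration by parts by this freezing argument, or at least treat the linear dissipative part exactly through the semigroup so that it never enters $\dot G$.
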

\begin{remark}1)If $\mu=0$ we can choose $s_1=s$ and the error  in the r.h.s of (\ref{rnls-main-e}) is of order $\epsilon^{1/2}$. Moreover, in this case, if in addition,  the $H^s$-norm of the solution $u(x,t)$ grows as $||u(x,t)||_s\lesssim e^{\epsilon t C(||u(0)||_s)}$, Theorem \ref{rnls-main} can be extended to time intervals of order $\epsilon^{-1}\log \epsilon^{-1}$ with the error $\epsilon^{\alpha}$ for  certain $\alpha>0$.

2) The method of this paper also applies to 2d nonlinear Schr\"odinger equations \mbox{($a=b=0$ and $d=2$ in Eq. (\ref{rnls1}))} with other polynomial nonlinearities, e.g.  the nonlinearities with Hamiltonians \mbox{$\mathcal{H}_3=\int |u|^2(u+\bar{u})dx$} and $\mathcal{H}_3^{\prime}=\int (u^3+\bar{u}^3)dx$.
\end{remark}

Equations that are similar to the effective equation (\ref{rnls-effective2}) recently appear in a number of works, e.g. a stochastic damp-driven version of it is constructed in \cite{skam2013}, using the same philosophy as the present paper. Some of our lemmas are borrowed from that work.  In \cite{pgsg2012}, similar equation is used to derive an effective  equation for a 1d wave equation (which turns out to be an integrable system). The  equations, similar to Eq. (\ref{rnls-effective2}) also are known in the theories of {\it wave turbulence}. There, it is called the {\it equation of discrete turbulence}, see \cite{naz2011}, Chapter 12.   We believe that our result   provides a useful insight in the related topics.

The paper is organized  as follows: In Section 1, we  discuss the effective equation~(\ref{rnls-effective2}). Theorem \ref{rnls-main} is proved in Section 2.  Finally, in Section 3, we discuss the validity of Proposition \ref{rnls-p1}.

 \section{The Effective system}

 Consider the Fourier transform $\mathcal{F}$ (\ref{Fourier}) for complex functions on $\mathbb{T}^d$. Then $|\mathcal{F}u|_s=||u||_s$, for every $s\in\mathbb{R}$.  We denote $\mathcal{F}u=v$ and write  Eq. (\ref{rnls2}) in  the $v$-variables:
\begin{equation}
\dot{v}_{\dk}-\epsilon^{-1}i\lambda_{\dk} v_{\dk}=-\mu\lambda_{\dk}^mv_{\dk}+P_{\dk}(v), \quad \dk\in\mathbb{Z}^d.
\end{equation}
Here $P_{\dk}$ is the  coordinate  component of the mapping $P(v)$ defined by 
$$P(v)=\mathcal{F}(b|u|^pu+c i|u|^{2q}u), \quad u=\mathcal{F}^{-1}(v).$$
 This mapping is analytic of polynomial growth:

\begin{lemma}
The mapping $P(v)$ is an analytic transform of the space $h^s$ with \mbox{$s>d/2$}. Moreover  the norm of $P(v)$ and that of  its  differential   $dP(v)$ have polynomial growth with respect to $|v|_s$.
\label{rnls-lem-lip0}
\end{lemma}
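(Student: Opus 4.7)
The plan is to reduce to a Sobolev algebra estimate in configuration space via the Fourier isometry. Since $\mathcal{F}:H^s\to h^s$ is an isometric $\mathbb{R}$-linear isomorphism for every $s\in\mathbb{R}$, it suffices to prove that the nonlinearity
$$N(u)=b|u|^{2p}u+ic|u|^{2q}u=b\,u^{p+1}\bar u^{p}+ic\,u^{q+1}\bar u^{q}$$
is a real-analytic map from $H^s(\mathbb{T}^d)$ into itself, with $\|N(u)\|_s$ and $\|dN(u)\|_{L(H^s,H^s)}$ bounded by polynomials in $\|u\|_s$; the lemma for $P$ on $h^s$ then follows by conjugation with $\mathcal{F}$.

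The key input is the standard Sobolev algebra property: for $s>d/2$ there is a constant $C_s$ with $\|fg\|_s\leqslant C_s\|f\|_s\|g\|_s$, together with the isometry $\|\bar u\|_s=\|u\|_s$. Iterating gives, for every integer $n\geqslant 0$,
$$\|u^{n+1}\bar u^{n}\|_s\leqslant C_s^{2n}\|u\|_s^{2n+1},$$
which already yields $\|N(u)\|_s\lesssim\|u\|_s^{2p+1}+\|u\|_s^{2q+1}$. Applying the same estimate to the formal differential
$$dN(u)h=b\bigl[(p+1)|u|^{2p}h+p\,u^{2}|u|^{2p-2}\bar h\bigr]+ic\bigl[(q+1)|u|^{2q}h+q\,u^{2}|u|^{2q-2}\bar h\bigr],$$
and to the analogous multilinear expressions for higher derivatives, bounds $\|dN(u)\|_{L(H^s,H^s)}$ by a polynomial in $\|u\|_s$ of degree $\max(2p,2q)$, and bounds each $k$-th Fr\'echet derivative uniformly on balls of $H^s$.

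Analyticity is then immediate: $N$ is a finite sum of monomials of fixed degree in $u$ and $\bar u$, so its Taylor series at any point converges in operator norm on every bounded set of $H^s$ by the multilinear estimates above. Equivalently, if one complexifies by regarding $u$ and $\bar u$ as independent variables, $N$ becomes a genuine polynomial map between complex Banach spaces, hence entire. Pulling back through the Fourier isomorphism yields the statement for $P$ on $h^s$. The only point requiring any care is the non-holomorphic dependence through $\bar u$, which forces one to interpret ``analytic'' in the real-analytic sense (or via the above complexification); once this interpretation is fixed, the proof is a direct consequence of the Banach algebra property of $H^s$ for $s>d/2$, and no further obstacle appears.
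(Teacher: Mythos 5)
Your proof is correct and follows essentially the same route as the paper, which disposes of this lemma in one sentence by citing the Banach/Hilbert algebra property of $H^s$ (equivalently $h^s$) for $s>d/2$; you have simply written out the details, including the correct formula for the real Fr\'echet differential and the standard remark that ``analytic'' must be read as real-analytic (or via complexification in $u$, $\bar u$) because of the $\bar u$-dependence. No gaps.
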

The assertion follows from the well known fact that the spaces $h^s$, $s>d/2$, are Hilbert algebras.

Let $$R(v)=(R_{\dk}(v),\;\dk\in\mathbb{Z}^d)$$ with $R_{\dk}(v)=b\mathbf{R}_{\dk}(v,p)+ci\mathbf{R}_{\dk}(v,q)$, where $\mathbf{R}_{\dk}(v,p)$ and $\mathbf{R}_{\dk}(v,q)$ are the  quantities in the right side of Eq. (\ref{rnls-effective2}) with the notation $a$ replaced by $v$.  For each \mbox{$\theta=(\theta_{\dk},\dk\in\mathbb{Z}^d)\in \mathbb{T}^{\infty}$}, denote by $\Phi_{\theta}$ the linear operator in $h^s$:
\[\Phi_{\theta}(v)=v',\quad v'_{\dk}=e^{i\theta_{\dk}}v_{\dk}.\]
Then 
\begin{lemma} For $v\in h^s$, $s>d/2$, we have 
\begin{equation}
R(v)=\frac{1}{2\pi}\int_0^{2\pi}\Phi_{-t\Lambda}P(\Phi_{t\Lambda}v)dt.
\label{rnls-resonant-av}
\end{equation}
\label{rnls-lemma-ra}
\end{lemma}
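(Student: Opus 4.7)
The plan is a direct computation based on two observations. First, $\Phi_{t\Lambda}$ multiplies each Fourier coefficient by a phase of modulus one, so it acts as an isometry on every $h^s$; combined with Lemma \ref{rnls-lem-lip0}, this gives a $t$-uniform bound on $P(\Phi_{t\Lambda}v)$ in $h^s$, so the integrand in (\ref{rnls-resonant-av}) is continuous, and its Bochner integral can be evaluated coordinate-wise. Second, the frequencies $\lambda_{\dk}=|\dk|^2$ are integers, so the characters $t\mapsto e^{it\omega}$ appearing below are $2\pi$-periodic with average $\mathbf{1}_{\{\omega=0\}}$, and the identity will follow by orthogonality.

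To implement this, I would fix $\dk\in\mathbb{Z}^d$ and $n\in\{p,q\}$ and substitute $\Phi_{t\Lambda}v$ into the polynomial $P_{\dk}(\cdot,n)$: each monomial $v_{\dk_1}\bar v_{\dk_2}\cdots v_{\dk_{2n+1}}$ picks up a factor $\exp\{it\sum_{j=1}^{2n+1}(-1)^{j-1}\lambda_{\dk_j}\}$, and applying $\Phi_{-t\Lambda}$ contributes a further factor $e^{-it\lambda_{\dk}}$, yielding
\[
[\Phi_{-t\Lambda}P(\Phi_{t\Lambda}v)]_{\dk}=b\!\!\sum_{\mathcal{S}(\dk,p)}\!\!e^{it\omega_p}v_{\dk_1}\bar v_{\dk_2}\cdots v_{\dk_{2p+1}}+ci\!\!\sum_{\mathcal{S}(\dk,q)}\!\!e^{it\omega_q}v_{\dk_1}\bar v_{\dk_2}\cdots v_{\dk_{2q+1}},
\]
where $\omega_n:=-\lambda_{\dk}+\sum_{j=1}^{2n+1}(-1)^{j-1}\lambda_{\dk_j}\in\mathbb{Z}$. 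Integrating in $t$ over $[0,2\pi]$ and using $\tfrac{1}{2\pi}\int_0^{2\pi}e^{it\omega}dt=\delta_{\omega,0}$ kills every summand with $\omega_n\neq0$; what survives is exactly the sub-collection $\mathcal{R}(\dk,n)\subset\mathcal{S}(\dk,n)$. Assembling the two pieces reproduces $b\mathbf{R}_{\dk}(v,p)+ci\mathbf{R}_{\dk}(v,q)=R_{\dk}(v)$, the $\dk$-th component of (\ref{rnls-resonant-av}).

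The only delicate point is the interchange of the $t$-integral with the (a priori infinite) sums over $\mathcal{S}(\dk,n)$. I expect to dispatch this by Fubini: Lemma \ref{rnls-lem-lip0} guarantees that the series defining $P_{\dk}(w,n)$ converges absolutely for every $w\in h^s$, and since $|\Phi_{t\Lambda}v|_s=|v|_s$ is independent of $t$, each term in the double sum is bounded in modulus by a $t$-independent summable quantity, making the exchange legitimate. Beyond that, everything reduces to a direct expansion of monomials and orthogonality of characters on the circle, so no further obstacle should arise; the identity then passes from coordinate-wise equality to equality in $h^s$ by continuity of $R$ and $P$ on $h^s$.
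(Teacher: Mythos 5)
Your proposal is correct and follows essentially the same route as the paper: expand each monomial of $P_{\dk}(\cdot,n)$ under the phase rotation $\Phi_{t\Lambda}$, observe that the accumulated phase is $e^{it\omega_n}$ with integer $\omega_n=-\lambda_{\dk}+\sum_j(-1)^{j-1}\lambda_{\dk_j}$, and average over $[0,2\pi]$ so that orthogonality of characters retains exactly the resonant terms $\mathcal{R}(\dk,n)$. Your extra remark justifying the interchange of the $t$-integral with the infinite sum via the absolute convergence guaranteed by Lemma \ref{rnls-lem-lip0} is a small but welcome refinement of a point the paper leaves implicit.
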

\begin{proof}

 For the quantity $P_{\dk}(v,q)$ we have
 \[\begin{split}&\frac{1}{2\pi}\int_0^{2\pi}e^{-i\lambda_{\dk}t}P_{\dk}(\Phi_{t\Lambda}(v),q)dt\\
 &=\frac{1}{2\pi}\int_0^{2\pi}\sum_{(\dk_1,\dots,\dk_{2q+1})\in\mathcal{S}(\dk,q)}\Big\{v_{\dk_1}\bar{v}_{\dk_2}\cdots v_{\dk_{2q-1}}\bar{v}_{\dk_{2q}}v_{\dk_{2q+1}}\\
 &\qquad\qquad\times\exp[-it(-\lambda_{\dk}+\sum_{j=1}^{2q+1}(-1)^{j-1}\lambda_{\dk_j})\big]\Big\}dt\\
 &=\sum_{(\dk_1,\dots,\dk_{2q+1})\in\mathcal{R}(\dk)}v_{\dk_1}\bar{v}_{\dk_2}\cdots v_{\dk_{2q-1}}\bar{v}_{\dk_{2q}}v_{\dk_{2q+1}}=\mathbf{R}_{\dk}(v,q).
 \end{split}\]
The same equality also holds for $P_{\dk}(v,p)$. 
Since $$P_{\dk}(v)=bP_{\dk}(v,p)+ciP_{\dk}(v,q),$$
then  the equality (\ref{rnls-resonant-av}) follows.
\end{proof}
\begin{lemma} The vector field $R(v)$ is locally Lipschitz in Hilbert spaces $h^s$, $s>d/2$.\label{rnls-lem-lip}\end{lemma}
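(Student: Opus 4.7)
The plan is to exploit the integral representation (\ref{rnls-resonant-av}) established in Lemma \ref{rnls-lemma-ra}, together with the local Lipschitz property of $P$ on $h^s$ provided by Lemma \ref{rnls-lem-lip0}. The crucial observation is that the resonant average is performed against a group of isometries of $h^s$, so the Lipschitz constant of $P$ transfers directly to $R$.

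First I would record that for every $\theta=(\theta_{\dk},\dk\in\mathbb{Z}^d)\in\mathbb{T}^{\infty}$ the operator $\Phi_{\theta}$ is an isometry of $h^s$: multiplying each Fourier coefficient by the unimodular factor $e^{i\theta_{\dk}}$ preserves
\[|v|_s^2=\sum_{\dk\in\mathbb{Z}^d}(|\dk|^{2s}+1)|v_{\dk}|^2.\]
In particular, $|\Phi_{t\Lambda}v|_s=|v|_s$ and $|\Phi_{t\Lambda}(v_1-v_2)|_s=|v_1-v_2|_s$ for every $t\in\mathbb{R}$. I would also note that $t\mapsto\Phi_{t\Lambda}v$ is continuous from $\mathbb{R}$ into $h^s$ (by dominated convergence applied to the Fourier series), so the integrand in (\ref{rnls-resonant-av}) is continuous with values in $h^s$ and Minkowski's inequality applies to the Bochner integral.

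Second, Lemma \ref{rnls-lem-lip0} asserts that $dP(v)$ has polynomial growth in $|v|_s$. Combined with the mean value inequality along the segment joining $w_1$ and $w_2$, this yields that for every $M>0$ there is a constant $L(M)$ with
\[|P(w_1)-P(w_2)|_s\leqslant L(M)\,|w_1-w_2|_s\quad\text{whenever}\quad |w_1|_s,|w_2|_s\leqslant M.\]

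Finally, for $v_1,v_2$ in the ball $\{|v|_s\leqslant M\}$ I would combine the three ingredients by writing
\[R(v_1)-R(v_2)=\frac{1}{2\pi}\int_0^{2\pi}\Phi_{-t\Lambda}\bigl[P(\Phi_{t\Lambda}v_1)-P(\Phi_{t\Lambda}v_2)\bigr]\,dt,\]
applying Minkowski to pull the $h^s$-norm under the integral, using the isometry property to drop $\Phi_{-t\Lambda}$ and to ensure $|\Phi_{t\Lambda}v_j|_s\leqslant M$, then invoking the Lipschitz bound for $P$, and finally the isometry again on the difference. This yields $|R(v_1)-R(v_2)|_s\leqslant L(M)\,|v_1-v_2|_s$, which is the desired local Lipschitz estimate. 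There is no real obstacle: the entire argument rests on the fact that resonant averaging against the one-parameter group $\{\Phi_{t\Lambda}\}$ is norm-preserving in $h^s$, so any such average of a locally Lipschitz vector field is itself locally Lipschitz with the same constants.
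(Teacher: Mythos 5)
Your argument is correct and is essentially identical to the paper's own proof: both pass through the averaging formula (\ref{rnls-resonant-av}), pull the $h^s$-norm inside the integral, use that the operators $\Phi_{t\Lambda}$ are isometries of $h^s$, and invoke the local Lipschitz property of $P$ from Lemma \ref{rnls-lem-lip0}. Your write-up merely makes explicit a few routine points (continuity of the integrand, the mean value inequality) that the paper leaves implicit.
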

\begin{proof}Let $v_1,v_2\in h^s$ and $|v_1|_s$, $|v_2|_s\leqslant M$.  Then  using Lemmas \ref{rnls-lem-lip0}, \ref{rnls-lemma-ra} and the fact that the operators~$\Phi_{t\Lambda}$, $t\in\mathbb{R}$ define isometries in $h^s$, we have 
\[\begin{split} |R(v_1)-R(v_2)|_s&\leqslant \frac{1}{2\pi}\int_0^{2\pi}|\Phi_{-t\Lambda}[P(\Phi_{t\Lambda}v_1)-P(\Phi_{t\Lambda}v_2)]|_sdt\\
&\leqslant  \frac{1}{2\pi}\int_0^{2\pi}C(M)|\Phi_{t\Lambda}(v_1-v_2)|_sdt\leqslant C(M)|v_1-v_2|_s.\end{split}\]This proves the assertion of the lemma.\end{proof}
From Lemma \ref{rnls-lem-lip}, we know that the effective  equation (\ref{rnls-effective2}) is well posed, at least locally,  in the space $h^s$, $s>d/2$.  When $a=b=0$, it is easy to see from Lemma~\ref{rnls-lemma-ra} that the quantities $||v||_0^2$ and $||v||_1^2$ are integral of motions for Eq. (\ref{rnls-effective2}).

\section{The Main theorem}
In this section we will prove   Theorem \ref{rnls-main}.  Fix $s>d/2$. Denote 
\[ B(M)=\{v\in h^s: |v|_s\leqslant M\},\quad \forall M>0.\]
Fix a $M_0>0$. Let  $u(\tau,x)$ be a solution of Eq. (\ref{rnls2}) such that $$||u(0,x)||_s\leqslant M_0,$$  and $$v(\tau)=\mathcal{F}(u(\tau,x)).$$ Without loss of generality, suppose that Assumption A holds with $T=1$. Then there exists $M_1\geqslant M_0$ such that 
\[v(\tau)\in B( M_1),\quad\tau\in[0,1].\]
Let $$a(\tau)=\Phi_{-\tau\epsilon^{-1}\Lambda }(v(\tau)).$$ Then $a(\tau)$ is the interaction representation picture of~$v(\tau)$. For every \mbox{$v=(v_{\dk},\dk\in\mathbb{Z}^d)$}, denote $$\mathbb{F}(v)=(-\mu\lambda_{\dk}^mv_{\dk},\dk\in\mathbb{Z}^d).$$ We have 
\begin{equation}
\dot{a}(\tau)=\mathbb{F}(a(\tau))+\Phi_{-\tau \epsilon^{-1}\Lambda}\Big(P\big(\Phi_{\tau\epsilon^{-1}\Lambda}(a(\tau))\big)\Big):=\mathbb{F}(a(\tau))+Y\big(a(\tau),\tau\big).
\end{equation}
Let $s_1\in(d/2,s]$. 
Using Lemma \ref{rnls-lem-lip0} and the fact the the operators $\Phi_{t\Lambda}$, $t\in\mathbb{R}$ define isometries on $h^{s_1}$, we get for any $v,\; v^{\prime}\in B(M_1)$ and $\tau\in\mathbb{R}$,
\begin{equation}
|Y(v, \tau)|_{s_1}\leqslant C(s_1,M_1),\quad |Y(v,\tau)-Y(v^{\prime},\tau)|_{s_1}\leqslant C(s_1,M_1)|v-v^{\prime}|_{s_1}.
\label{rnls-r-lip2}
\end{equation}
Denote by $e^{\mathbb{F}t}$  the continuous one parameter group generated by the operator $\mathbb{F}$.
The following lemma is well known in the theories of parabolic PDEs, see e.g. Section 2.1 in \cite{Chu}.
\begin{lemma}
Let $s_1\leqslant s$, then for any $ 0\leqslant\tau_1\leqslant \tau_2$, we have 
\[\begin{split}&|e^{\mathbb{F}\tau_2}v-e^{\mathbb{F}\tau_1}v|_{s_1}\leqslant C(s_1,s,\mu)|\tau_2-\tau_1|^{\frac{s-s_1}{m}}|v|_s,\\
&|\mathbb{F}e^{\mathbb{F}\tau}v|_{s_1}\leqslant \big[\big(\frac{1-(s-s_1)/m}{\tau}\big)^{1-(s-s_1)/m}+1\big]|v|_s,
\end{split}\]
for every $ v\in h^s$, where $C(s_1,s,0)=0$.
\label{parabolic}
\end{lemma}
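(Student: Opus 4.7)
The proof reduces to two scalar symbol estimates because $\mathbb{F}$ is a Fourier multiplier: $e^{\mathbb{F}\tau}$ acts diagonally on Fourier coefficients as $v_{\dk}\mapsto e^{-\mu\lambda_{\dk}^m\tau}v_{\dk}$, and $\mathbb{F}e^{\mathbb{F}\tau}$ as $v_{\dk}\mapsto -\mu\lambda_{\dk}^m e^{-\mu\lambda_{\dk}^m\tau}v_{\dk}$. Bounding the $h^{s_1}$-norm of these applied to $v\in h^s$ by $|v|_s$ therefore reduces to a uniform-in-$\dk$ bound on the scalar symbol times the Sobolev weight ratio $(1+|\dk|^{2s_1})^{1/2}/(1+|\dk|^{2s})^{1/2}$, which for $|\dk|\geq 1$ is of order $\lambda_{\dk}^{(s_1-s)/2}$. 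This is the standard smoothing estimate for the analytic semigroup generated by $-(-\Delta)^m$.

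For the second (smoothing) bound on $|\mathbb{F}e^{\mathbb{F}\tau}v|_{s_1}$, I would perform the change of variable $y=\mu\lambda_{\dk}^m\tau$ inside the reduced symbol, so that $\mu\lambda_{\dk}^m e^{-\mu\lambda_{\dk}^m\tau}$ combined with the negative power of $\lambda_{\dk}$ coming from the weight ratio takes the form $\tau^{-(1-(s-s_1)/m)}\cdot y^{1-(s-s_1)/m}e^{-y}$ (up to a power of $\mu$). Applying the elementary calculus identity $\sup_{y\geq 0}y^a e^{-y}=a^a e^{-a}$ with $a=1-(s-s_1)/m$ produces exactly the prefactor $((1-(s-s_1)/m)/\tau)^{1-(s-s_1)/m}$ in the statement. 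The additive ``$+1$'' absorbs the low-frequency contribution, where the weight ratio does not yield a negative power of $|\dk|$; the zero mode contributes nothing since $\mathbb{F}$ annihilates it.

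For the first (H\"older-in-time) bound, I would start from the semigroup identity
\[e^{\mathbb{F}\tau_2}v-e^{\mathbb{F}\tau_1}v=\int_{\tau_1}^{\tau_2}\mathbb{F}e^{\mathbb{F}t}v\,dt,\]
apply Minkowski's inequality, and substitute the bound from the previous step. Because $1-(s-s_1)/m<1$, the integrand $t^{-(1-(s-s_1)/m)}$ is integrable near $t=0$, and the elementary inequality $\int_{\tau_1}^{\tau_2}t^{-\gamma}dt\leq (\tau_2-\tau_1)^{1-\gamma}/(1-\gamma)$ for $\gamma<1$ and $\tau_1\geq 0$ produces precisely the factor $|\tau_2-\tau_1|^{(s-s_1)/m}$. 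The case $\mu=0$ is trivial since then $\mathbb{F}\equiv 0$ and $e^{\mathbb{F}\tau}$ is the identity, so both bounds reduce to $0\leq 0$, consistent with $C(s_1,s,0)=0$.

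There is no real obstacle: once the problem is reduced via Parseval to a one-mode symbol bound, everything follows from the elementary calculus of $y\mapsto y^a e^{-y}$. The only piece of care is bookkeeping the $\mu$-dependence through the substitution and separating the weight ratio into its high-frequency part (which provides the gain in $\lambda$ needed to match the exponent $(s-s_1)/m$) and a low-frequency remainder (responsible for the ``$+1$'' in the second inequality).
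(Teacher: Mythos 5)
Your overall strategy is the right one, and it is worth noting that the paper itself supplies no proof of this lemma at all (it only cites the parabolic--PDE literature), so a Fourier--side symbol computation is exactly what is called for. However, there is a concrete arithmetic error at the key step, and it changes the exponents. You correctly record that the weight ratio is of order $\lambda_{\dk}^{(s_1-s)/2}$; but since $\lambda_{\dk}=|\dk|^2$ and the symbol of $\mathbb{F}$ is $-\mu\lambda_{\dk}^m$, that ratio equals $(\lambda_{\dk}^m)^{-(s-s_1)/(2m)}$. Hence after the substitution $y=\mu\lambda_{\dk}^m\tau$ the reduced symbol is
\[
\mu^{\frac{s-s_1}{2m}}\,\tau^{-\big(1-\frac{s-s_1}{2m}\big)}\,y^{1-\frac{s-s_1}{2m}}e^{-y},
\]
not $\tau^{-(1-(s-s_1)/m)}\,y^{1-(s-s_1)/m}e^{-y}$ as you wrote. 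Carried out correctly, your computation yields both estimates with $\frac{s-s_1}{2m}$ in place of $\frac{s-s_1}{m}$ --- the standard smoothing rate for a semigroup generated by an operator of differential order $2m$. The inequalities with $\frac{s-s_1}{m}$ are in fact false for small $\tau$ when $\mu>0$: taking $s-s_1=m$, the claimed second bound is $O(1)$ as $\tau\to0$, whereas $\sup_{x\geqslant0}\mu x^{1/2}e^{-\mu x\tau}$ grows like $\tau^{-1/2}$. So your derivation does not actually produce the statement you claim it produces; it was forced to match the printed exponent.

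The discrepancy is almost certainly a typo in the paper: the proof of the main theorem uses $\int_0^{\tilde\tau}(1/(2\tau))^{1-\frac{s-s_1}{2m}}d\tau$ and ends with the error $\epsilon^{\frac{s-s_1}{2m}}$, i.e.\ precisely the $2m$ version of the lemma. Once the factor of two is fixed, the rest of your argument is sound: the identity $e^{\mathbb{F}\tau_2}v-e^{\mathbb{F}\tau_1}v=\int_{\tau_1}^{\tau_2}\mathbb{F}e^{\mathbb{F}t}v\,dt$ together with the subadditivity of $x\mapsto x^{1-\gamma}$ gives the H\"older bound, and your treatment of the low frequencies, the zero mode, and the case $\mu=0$ is correct. (A slightly cleaner route to the first inequality, avoiding the integrability constraint and the stray $+1$, is the direct symbol bound $|e^{-x\tau_2}-e^{-x\tau_1}|\leqslant\min(1,x|\tau_2-\tau_1|)\leqslant (x|\tau_2-\tau_1|)^{\alpha}$ with $x=\mu\lambda_{\dk}^m$ and $\alpha=\frac{s-s_1}{2m}$.) You should therefore correct the exponent bookkeeping and state explicitly that what you prove is the $2m$ version actually used downstream in Section 2.
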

\begin{lemma}For any $0\leqslant \tau_1\leqslant \tau_2\leqslant1$, we have 
\[|a(\tau)-a(\tau_1)|_{s_1}\leqslant  C(s_1,s, M_1)\big(|\tau_2-\tau_1|^{\frac{s-s_1}{m}}+|\tau_2-\tau_1|\big).\]
\label{grow}
\end{lemma}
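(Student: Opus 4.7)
The plan is to apply Duhamel's formula (the variation-of-constants formula) to the equation $\dot{a}(\tau)=\mathbb{F}(a(\tau))+Y(a(\tau),\tau)$, writing
\[
a(\tau_2)-a(\tau_1)=\bigl[e^{\mathbb{F}(\tau_2-\tau_1)}a(\tau_1)-a(\tau_1)\bigr]+\int_{\tau_1}^{\tau_2}e^{\mathbb{F}(\tau_2-\sigma)}Y(a(\sigma),\sigma)\,d\sigma,
\]
and then to estimate the two pieces in the $|\cdot|_{s_1}$ norm separately.

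For the first bracket, I would directly invoke the first estimate of Lemma \ref{parabolic}, which gives
\[
\bigl|e^{\mathbb{F}(\tau_2-\tau_1)}a(\tau_1)-a(\tau_1)\bigr|_{s_1}\leqslant C(s_1,s,\mu)\,|\tau_2-\tau_1|^{\frac{s-s_1}{m}}|a(\tau_1)|_s.
\]
The key observation is that $|a(\tau_1)|_s=|v(\tau_1)|_s\leqslant M_1$, since $\Phi_{-\tau_1\epsilon^{-1}\Lambda}$ is an isometry of $h^s$ and $v(\tau_1)\in B(M_1)$ by Assumption~A. (When $\mu=0$ this term is zero, which matches the constant $C(s_1,s,0)=0$.)

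For the Duhamel integral, I would use two facts. First, since $\mu\geqslant 0$ the multiplier $e^{-\mu\lambda_{\dk}^m t}$ has modulus $\leqslant 1$ for $t\geqslant0$, so $e^{\mathbb{F}t}$ is a contraction in each $h^{s_1}$ for $t\geqslant 0$. Second, the bound $|Y(v,\sigma)|_{s_1}\leqslant C(s_1,M_1)$ from (2.1) is uniform in $\sigma$ and in $v\in B(M_1)$, and $a(\sigma)\in B(M_1)$ for $\sigma\in[0,1]$ because $|a(\sigma)|_s=|v(\sigma)|_s\leqslant M_1$. Thus
\[
\Bigl|\int_{\tau_1}^{\tau_2}e^{\mathbb{F}(\tau_2-\sigma)}Y(a(\sigma),\sigma)\,d\sigma\Bigr|_{s_1}\leqslant \int_{\tau_1}^{\tau_2}|Y(a(\sigma),\sigma)|_{s_1}\,d\sigma\leqslant C(s_1,M_1)\,|\tau_2-\tau_1|.
\]
Summing the two pieces yields the claimed estimate.

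There is no real obstacle here: the argument is a standard mild-formulation Hölder-continuity estimate for semilinear parabolic evolutions. The only subtlety is correctly combining the smoothing/contraction properties of $e^{\mathbb{F}t}$ in $h^{s_1}$ (which trivialize when $\mu=0$) with the isometric nature of $\Phi_{t\Lambda}$ in order to reduce all norms of $a$ to norms of $v$, for which a priori bounds are already in hand.
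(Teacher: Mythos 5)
Your proof is correct and follows exactly the route of the paper: the same Duhamel representation $a(\tau_2)=e^{\mathbb{F}(\tau_2-\tau_1)}a(\tau_1)+\int_{\tau_1}^{\tau_2}e^{\mathbb{F}(\tau_2-\sigma)}Y(a(\sigma),\sigma)\,d\sigma$, with the first term handled by Lemma \ref{parabolic} and the integral by the uniform bound (\ref{rnls-r-lip2}). You simply make explicit the details (isometry of $\Phi_{t\Lambda}$, contraction of $e^{\mathbb{F}t}$) that the paper leaves implicit.
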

\begin{proof}
For $a(\tau_2)$ we have the  representation below:
\[a(\tau_2)=e^{\mathbb{F}(\tau_2-\tau_1)}a(\tau_1)+\int_{\tau_1}^{\tau_2}e^{\mathbb{F}(\tau_2-\tau)}Y(a(\tau),\tau)ds.\]
The the assertion of the lemma  follows from Lemma \ref{parabolic} and inequity (\ref{rnls-r-lip2}). \end{proof}
Denote $\mathcal{Y}(v,\tau)=Y(v,\tau)-R(v)$. Then by Lemma \ref{rnls-lem-lip}, the relation (\ref{rnls-r-lip2}) also holds for the map $\mathcal{Y}(v,\tau)$.

Now we fix some $$s_1\in(d/2,s).$$ 
The following lemma is the main step of our proof.
\begin{lemma} For $\tilde{\tau}\in[0,1]$, 
\[\Big|\int_{0}^{\tilde{\tau}}\mathcal{Y}(a(\tau),\tau)d\tau\Big|_{s_1}\leqslant C(M_1,s_1,s)[ \epsilon^{(s-s_1)/m}+ \epsilon^{1/2}].
\]
\label{rnls-lem-apr}
\end{lemma}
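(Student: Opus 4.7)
The plan is a classical Krylov--Bogoliubov style averaging argument. The key observation is that since $\lambda_{\dk}=|\dk|^2\in\mathbb{Z}$, the operator $\Phi_{t\Lambda}$ is $2\pi$-periodic in $t$, so for each fixed $v$ the map $\tau\mapsto Y(v,\tau)=\Phi_{-\tau\epsilon^{-1}\Lambda}P(\Phi_{\tau\epsilon^{-1}\Lambda}v)$ is $2\pi\epsilon$-periodic; moreover, Lemma \ref{rnls-lemma-ra} (after the change of variable $t=\tau/\epsilon$) identifies its mean over one period as $R(v)$. Consequently $\mathcal{Y}(v,\cdot)$ has zero mean over every period, and the bulk of $\int_{0}^{\tilde\tau}\mathcal{Y}(a(\tau),\tau)\,d\tau$ will cancel once $a(\tau)$ is replaced by its value at the left endpoint of each period.

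Concretely, I would set $h=2\pi\epsilon$, $N=\lfloor\tilde\tau/h\rfloor$, $\tau_k=kh$, and split
\[
\int_{0}^{\tilde\tau}\mathcal{Y}(a(\tau),\tau)\,d\tau=\sum_{k=0}^{N-1}\int_{\tau_k}^{\tau_{k+1}}\!\mathcal{Y}(a(\tau_k),\tau)\,d\tau+\sum_{k=0}^{N-1}\int_{\tau_k}^{\tau_{k+1}}\!\bigl[\mathcal{Y}(a(\tau),\tau)-\mathcal{Y}(a(\tau_k),\tau)\bigr]d\tau+\int_{\tau_N}^{\tilde\tau}\!\mathcal{Y}(a(\tau),\tau)\,d\tau.
\]
The first sum vanishes term by term: each frozen integral is over exactly one period of $Y(a(\tau_k),\cdot)$, and by Lemma \ref{rnls-lemma-ra} it equals $2\pi\epsilon\,R(a(\tau_k))-2\pi\epsilon\,R(a(\tau_k))=0$. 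For the second sum, the Lipschitz bound (\ref{rnls-r-lip2}) combined with Lemma \ref{grow} (applied with $\tau_2-\tau_1=h$) gives a per-subinterval estimate of order $C\,h\,(h^{(s-s_1)/m}+h)$; summing the $N\sim h^{-1}$ subintervals produces $C(h^{(s-s_1)/m}+h)=C(\epsilon^{(s-s_1)/m}+\epsilon)$. The remainder interval $[\tau_N,\tilde\tau]$ has length $<h=2\pi\epsilon$ and contributes $O(\epsilon)$ directly from (\ref{rnls-r-lip2}). Collecting the pieces and using $\epsilon\leqslant\epsilon^{1/2}$ for small $\epsilon$ gives the claimed inequality.

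The main technical point is the exact-cancellation step: one must notice that choosing $h$ to be exactly one period of $Y(v,\cdot)$, combined with the integer-valuedness of the $\lambda_{\dk}$, makes $\int_{\tau_k}^{\tau_{k+1}}\mathcal{Y}(v,\tau)\,d\tau$ vanish identically rather than being only $O(\epsilon^2)$. If one instead used a generic step $h$, each frozen piece would be only $O(\epsilon)$, summing to $O(\epsilon/h)$, forcing a balance $h=\sqrt{\epsilon}$ that would yield the weaker exponent $\epsilon^{(s-s_1)/(2m)}$ (the one that in fact appears in Theorem \ref{rnls-main} after the subsequent Gronwall step). Selecting $h=2\pi\epsilon$ is exactly what preserves the stronger exponent $\epsilon^{(s-s_1)/m}$ at the level of this lemma.
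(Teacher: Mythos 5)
Your proof is correct, and although it shares the paper's overall ``freeze-and-sum'' framework, the key mechanism is genuinely different. The paper partitions $[0,1]$ into intervals of length $T_0$, computes the frozen integrals $\int_{b_l}^{\tilde\tau}\mathcal{Y}_{\dk}(a(b_l),\tau)\,d\tau$ explicitly as oscillatory sums, and uses only that the nonzero divisors $-\lambda_{\dk}+\sum_j(-1)^{j-1}\lambda_{\dk_j}$ have modulus at least $1$ to bound each frozen piece by $O(\epsilon)$; balancing the resulting $O(\epsilon/T_0)$ against the drift term forces $T_0=\epsilon^{1/2}$. You instead exploit the full commensurability of the frequencies ($\lambda_{\dk}\in\mathbb{Z}$, hence $Y(v,\cdot)$ is exactly $2\pi\epsilon$-periodic with mean $R(v)$ by Lemma \ref{rnls-lemma-ra}) to make the frozen pieces vanish identically on period-length subintervals $h=2\pi\epsilon$, so no balancing is needed and the drift term alone, via Lemma \ref{grow} and (\ref{rnls-r-lip2}), controls the error. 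Your route is cleaner (no explicit evaluation of the oscillatory integrals $\mathcal{I}_{\dk}$) and delivers the sharper rate $\epsilon^{\min\{(s-s_1)/m,\,1\}}$. It is worth noting that the paper's own displayed computation, with $T_0=\epsilon^{1/2}$, actually produces $T_0^{(s-s_1)/m}=\epsilon^{(s-s_1)/(2m)}$ rather than the $\epsilon^{(s-s_1)/m}$ asserted in the lemma (harmless for Theorem \ref{rnls-main}, whose final rate is $\epsilon^{(s-s_1)/(2m)}$ in any case), so your argument is the one that proves the lemma exactly as stated. The only price is robustness: the exact-cancellation step needs all frequencies to be integer multiples of a common constant, whereas the paper's estimate would survive under the weaker hypothesis that the nonresonant divisors are merely bounded away from zero.
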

\begin{proof} Denote by $\mathcal{Y}_{\dk}(v,\tau)$, $\dk\in\mathbb{Z}^d$ the coordinate components of the map $\mathcal{Y}(v,\tau)$.
We first fix some $T_0\in[0,1]$ and divide the time interval $[0,1]$ into subintervals~$[b_l, b_{l-1}]$, $l=1,\cdots, N$, such that :
\[b_0=0, b_l-b_{l-1}=T_0,\quad  \text{for}\quad l=1,\dots, N-1, b_N-b_{N-1}\leqslant T_0,\; a_N=1, \]
where $N\leqslant 1/T_0+1$.
 

For $\tilde{\tau}\in[b_{l}, b_{l+1}]$, let us denote $\mathbf{Y}_{\dk}(\tilde{\tau})=\int_{b_l}^{\tilde{\tau}}\mathcal{Y}_{\dk}(a(\tau),\tau)d\tau$.
Then
\[\mathbf{Y}_{\dk}(\tilde{\tau})=\mathbf{Y}_{\dk}(\tilde{\tau})-\int_{b_{l}}^{\tilde{\tau}}\mathcal{Y}_{\dk}(a(b_{l}), \tau)d\tau+\int_{b_{l}}^{\tilde{\tau}}\mathcal{Y}_{\dk}(a(b_{l}), \tau)d\tau.\]
The last term equals $\mathcal{I}_{\dk}(b_l,\tilde{\tau},q)+i\mathcal{I}_{\dk}(b_l,\tilde{\tau},p)$, with 
\[\begin{split}\mathcal{I}_{\dk}(b_{l},\tilde{\tau},q)=&\sum_{(\dk_1,\dots,\dk_{2q+1})\in S(\dk,q)\setminus \mathcal{R}(\dk,q)}a_{\dk_1}(b_{l})\overline{a_{\dk_2}(b_{l})}\dots a_{\dk_{2q-1}}(b_{l})\overline{a_{\dk_{2q}}(b_{l})}a_{\dk_{2q+1}}(b_{l})\\
&\times\frac{\epsilon}{i[-\lambda_{\dk}+ \sum_{j=1}^{2q+1}(-1)^{j-1}\lambda_{\dk_j}]}\exp\{\epsilon^{-1}i[-\lambda_{\dk}+ \sum_{j=1}^{2q+1}(-1)^{j-1}\lambda_{\dk_j}]\tau\}\Big|_{b_l}^{\tilde{\tau}}.\end{split}\]
Let $$\mathcal{I}(b_l,\tilde{\tau})=\big(\mathcal{I}_{\dk}(b_l,\tilde{\tau},q)+i\mathcal{I}_{\dk}(b_l,\tilde{\tau},p),\dk\in\mathbb{Z}^d\big).$$ Since the quantities $|-\lambda_{\dk}+ \sum_{j=1}^{2q+1}(-1)^{j-1}\lambda_{\dk_j}|$, if do not equal to zero, are alway bigger than 1, hence we have 
\[\max_{\tilde{\tau}\in[b_l,b_{l+1}]}|\mathcal{I}(b_l,\tilde{\tau})|_s\leqslant 2\epsilon\max_{v\in B(M_1)}|P(v)|_s\leqslant\epsilon 2C(M_1).\]
Then choosing $T_0=\epsilon^{1/2}$, using Lemma \ref{grow},  we obtain
\[ \begin{split}\Big|\int_{0}^{\tilde{\tau}}\mathcal{Y}(a(\tau),\tau)d\tau\Big|_{s_1}
&\leqslant \sum_{l=0}^{N-1}\Big\{\int_{b_{l}}^{b_{l+1}}|\mathcal{Y}(a(\tau),\tau)-\mathcal{Y}(a(b_{l}),\tau)|_{s_1}ds+|\mathcal{I}(b_{l},b_{l+1})|_{s_1}\Big\}\\
&\leqslant\sum_{l=0}^{m-1}\Big[\int_{b_l}^{b_{l+1}}C(M_1)|a(\tau)-a(b_l)|_{s_1}d\tau+\epsilon2 C(M_1)]\\
&\leqslant[ (T_0^2+T_0^{1+(s-s_1)/m})2C(M_1)+\epsilon2 C(M_1)](\frac{1}{T_0}+1)\\
&\leqslant C^{\prime}(M_1)[\epsilon^{(s-s_1)/m}+\epsilon^{1/2}].
\end{split}
\]
This proof the assertion of the Lemma.
\end{proof}

Let $a^{\prime}(\tau)$ be a solution of the effective equation (\ref{rnls-effective2}) with initial datum \mbox{$a^{\prime}(0)=a(0)$}. Denote 
$$T^{\prime}=\min\{\tau: \; |a^{\prime}(\tau)|_p\geqslant M_1+1\}\quad\text{and}\quad T'_1=\min\{1,T'\}.$$
For $\tilde{\tau}\in[0, T'_1]$,
\[a(\tilde{\tau})-a'(\tilde{\tau})=\int_0^{\tilde{\tau}}e^{\mathbb{F}(\tilde{\tau}-\tau)}[R(a(\tau))-R(a'(\tau))]d\tau+\int_0^{\tilde{\tau}}e^{\mathbb{F}(\tilde{\tau}-\tau)}\mathcal{Y}(a(\tau),\tau)d\tau.\]
Now we estimate the $|\cdot|_{s_1}$-norm of the last quantity using Lemmas \ref{parabolic} and  \ref{rnls-lem-apr}.
\[\begin{split}&\Big|\int_0^{\tilde{\tau}}e^{\mathbb{F}(\tilde{\tau}-\tau)}\mathcal{Y}(a(\tau),\tau)d\tau\Big|_{s_1}\\
&=\Big|\int_0^{\tilde{\tau}}\mathcal{Y}(a(\tau),\tau)d\tau+\int_0^{\tilde{\tau}}\mathbb{F}\big(e^{\mathbb{F}(\tilde{\tau}-\tau)}\int_0^{\tau}\mathcal{Y}(a(\tau'),\tau')d\tau'\big)d\tau\Big|_{s_1}\\
&\leqslant C(M_1,s_1,s)[\epsilon^{(s-s_1)/m}+\epsilon^{1/2}]+C'(M_1,s_1,s)[\epsilon^{\frac{s-s_1}{2m}}+\epsilon^{1/2}]\int_0^{\tilde{\tau}}\Big(\frac{1}{2\tau}\Big)^{(1-\frac{s-s_1}{2m})}d\tau \\
&\leqslant C''(M_1,s,s_1)[\epsilon^{\frac{s-s_1}{2m}}+\epsilon^{1/2}].
\end{split}\]
So
\[|a^{\prime}(\tilde{\tau})-a(\tilde{\tau})|_{s_1}\leqslant \int_0^{\tilde{\tau}}C(M_1+1,s_1)|a^{\prime}(\tau)-a(\tau)|_{s_1}d\tau+C(M_1,s,s_1)[\epsilon^{\frac{s-s_1}{2m}}+\epsilon^{1/2}].\]
By Gronwall's lemma, we have that 
$$|a^{\prime}(\tilde{\tau})-a(\tilde{\tau})|_{s_1}\leqslant C[\epsilon^{\frac{s-s_1}{2m}}+\epsilon^{1/2}],\quad \tilde{\tau}\in[0, T^{\prime}_1].$$
Let fix any positive $\rho<<1$. For any $s_1<s$,  if  $\epsilon$ is  small enough, then  using the bootstrap argument  we get that  
\[|a'(T'_1)|_{s_1}\leqslant M_1+\rho.\]
Therefore $|a'(T'_1)|_s\leqslant M_1+\rho$, which means that $T'>1$.

Since $I(a(\tau))=I(v(\tau))$, we have 
\[\begin{split}&|I(v(\tau))-I(a^{\prime}(\tau))|_{s_1}^{\sim}=|I(a(\tau))-I(a'(\tau))|^{\sim}_{s_1}\\
&\leqslant |a(\tau)-a'(\tau)|_{s_1}\leqslant C[\epsilon^{\frac{s-s_1}{2m}}+\epsilon^{1/2}],\quad\tau\in[0,1].
\end{split}\]
 This finishes the proof of Theorem \ref{rnls-main}.

\section{ Discussion of Proposition \ref{rnls-p1}}
\subsection{The case $\mu>0$, $b\leqslant0$ and $c\leqslant0$} In this subsection we denote $|\cdot|_s$ to be the $L_s$-norm. We first fix arbitrary  $T>0$ and $m=1$. Let $u(t,x)$ be a solution of Eq. (\ref{rnls1}) with $u(0,x)=u_0$.  Take the $L_2$-scalar product of Eq. (\ref{rnls1}) and $u$,
\[\begin{split}\frac{d}{dt}||u||_0^2&=2\langle u, u_t\rangle=2\big\langle u,-i\Delta u+\epsilon(\mu \Delta u-b|u|^{2p}u+ci|u|^{2q}u)\big\rangle\\
&=\epsilon\big(-2\mu||u||_1^2+2\mu||u||_0^2+2b|u|_{2p+2}^{2p+2}\big)\leqslant 0.
\end{split}\]
Therefore we have 
\begin{equation}\label{l2-norm}
||u(t)||_0\leqslant ||u_0||_0, \quad\text{for}\quad t\in[0,+\infty).\end{equation}
We rewrite Eq. (\ref{rnls1}) as 
\begin{equation}\label{h1-ham}u_t+i(\Delta u-c|u|^{2q}u)=\Delta u+b|u|^{2p}u.\end{equation}
The l.h.s is a hamiltonian system with Hamiltonian function
\[\mathcal{H}(u)=\frac{1}{2}\langle-\Delta u, u\rangle-\frac{c\epsilon}{2q+2}|u|_{2q+2}^{2q+2}.\]
We have $d\mathcal{H}(v)=\langle-\Delta u,v\rangle-c\epsilon\langle |u|^{2q}u,v\rangle$, and if $v$ is the vector field in the l.h.s of Eq. (\ref{h1-ham}), then $d\mathcal{H}(v)=0$. So 
\[\frac{d}{dt}\mathcal{H}(u(t))=\epsilon\langle-\Delta u,\Delta u+b |u|^{2p}u\rangle-\epsilon^2 bc|u|_{2p+2q+2}^{2p+2q+2}-\epsilon^2 c\langle |u|^{2q}u,\Delta u\rangle.\]
Denoting $U_q=\frac{1}{q+1}u^{q+1}$ and $U_p=\frac{1}{p+1}u^{p+1}$, we have 
\[\langle |u|^{2q}u,\Delta u\rangle\leqslant -\int_{\mathbb{T}^d}|\nabla u|^2|u|^{2q}dx=-||\nabla U_q||_0^2.\]
and a similar relation holds for $q$ replaced by $p$. Therefore
\[\frac{d}{dt}\mathcal{H}(u(t))\leqslant0,\]
and \[\mathcal{H}(u(t))\leqslant \mathcal{H}(u(0)),\quad \text{for} \quad t\in[0,\infty).\]
Therefore, 
\[||u(t)||_1\leqslant (\mathcal{H}(u_0)+|u_0|_2^2)^{1/2},\quad t\in[0,\infty).\]
Now we consider 
\begin{equation}\label{h2-norm}\frac{1}{2}\frac{d}{dt}\langle (-\Delta)^2u,u\rangle=\epsilon\langle (-\Delta)^2u, \mu\Delta u+b|u|^{2p}+ic|u|^{2q}u\rangle.
\end{equation}
Using the integration by part and  the H\"older inequality we obtain that
\begin{equation}
\langle (-\triangle)^2u, |u|^{2p}u\rangle\leqslant ||u||_3|(|u|^{2p}\nabla u)|_2\leqslant ||u||_3 |u|_{2pq_1}^{2p}|\nabla u|_{p_1},\label{5.13}
\end{equation}
where $p_1,q_1<\infty$ satisfy $1/p_1+1/q_1=1/2$.   Let $p_1$ and $q_1$ have the form
\[p_1=\frac{2d}{d-2s'},\quad q_1=\frac{d}{s'}.\]
We specify parameter $s$:
for $d\geqslant 3$,   choose $s'=p(d-2)<\min\{d/2,2\}$;
for $d=1,2$, choose $s\in (0,\frac{1}{2})$.  Additionally, we assume 
\begin{equation}
0\leqslant p,q<\infty\quad \text{if}\quad d=1,2\quad\text{and}\quad 0\leqslant p,q<\min\{\frac{d}{2},\frac{2}{d-2}\}\quad \text{if}\quad d\geqslant 3.
\label{cgl1c-2}
\end{equation}
 Then  
the Sobolev embeddings 
\begin{equation}H^{s'}(\mathbb{T}^d)\to L^{p_1}(\mathbb{T}^d) \quad\text{and} \quad H^1(\mathbb{T}^d)\to L^{2pq_1}(\mathbb{T}^d),
\label{sobolev}
\end{equation}
 imply  that 
\[|\nabla u|_{p_1}\leqslant ||u||_{1+s'},\quad |u|_{2pq_1}^{2p}\leqslant ||u||_1^{2p}.\]
Applying  the interpolation and the Young inequality we find that for any $\delta>0$,

\begin{equation}
\begin{split}
-\langle \triangle^2 u , |u|^{2p}u \rangle&\leqslant ||u||_{3}||u||_{1+s'}||u||_1^{2p}\\
&\leqslant C||u||_{3}^{1+\frac{1+s'}{3}}||u||_0^{\frac{2-s'}{3}}||u||_1^{2p}\\
&\leqslant \delta ||u||_{3}^2+C(\delta)(||u||_0^{\frac{2-s'}{3}}||u||_1^{2p})^{\frac{2-s'}{6}},
\end{split}
\end{equation}
We can deal with other terms in (\ref{h2-norm})  similarly. Choosing  suitable   $\delta$, from the inequality above together with (\ref{h2-norm}) we have
\[\frac{1}{2}\frac{d}{dt}||u||_2^2\leqslant -\frac{\mu\epsilon}{2}||u||_3^2+C(2,||u_0||_1).\]
So 
\[||u(t)||_2\leqslant C'(2,||u_0||_2, T), \quad t\in[0,\epsilon^{-1}T].\]
By similar argument, for any $l\geqslant3$, we can obtain
\[||u(t)||_l\leqslant C'(l,||u_0||_l, T), \quad t\in[0,\epsilon^{-1}T]. \]
 The additional condition (\ref{cgl1c-2}) is needed to insure  the validity of the  Sobolev imbeddings~(\ref{sobolev}), which are the key of the proof. It is not difficult to see that for any fixed $p,q,d\in\mathbb{N}$, we always can find $m\in\mathbb{N}$ large enough and suitable Sobolev imbeddings to make the arguments above work.  This confirms the assertion  of Proposition~\ref{rnls-p1}.

\subsection{The case $\mu=b=0$.} For simplicity we assume $c=\pm1$.
Briefly speaking, in this case,  Proposition \ref{rnls-p1} directly follows from the global existence theory for solutions of the nonlinear Schr\"odinger equation
\begin{equation}
\label{rnls3}
u_t+i\Delta u=\pm i\epsilon|u|^{2q}u.
\end{equation}
 The  equation (\ref{rnls3}) has two conservative quantities:
\begin{equation}||u(t)||_0=||u(0)||_0,
\end{equation}
and \[E_q(u(t))=\int_{\mathbb{T}^d}\frac{1}{2}|\nabla u(x,t)|^2dx\pm\frac{\epsilon}{2q+2}\int_{\mathbb{T}^d}|u(x,t)|^{2q+2}dx=E(u(0)).\]
We claim the $H^1$-norm $||u(t)||_1$ remains bounded if the parameter $\epsilon$ is small enough. Indeed, the defocusing case is clear. In the focusing case, we have 
\[\int_{\mathbb{T}^d}|\nabla u(x,t)|^2dx=\frac{\epsilon}{q+1}\int_{\mathbb{T}^d}|u(x,t)|^{2q+2}dx+2E(u(0)).\]
Using the $L^2$-conservation law and the Sobolev embedding:  $$H^1(\mathbb{T}^d)\to L^r, \quad r<\infty \quad \text{and}\quad r\leqslant \frac{2d}{d-2},$$ we obtain for $d$ and $q$ satisfying condition (\ref{rnls-p1-r0}),
\[||u(t)||_1^2\leqslant ||u(0)||_1^2+\epsilon C(q,d)||u(t)||_1^{2q+2}.\]
So $$||u(t)||_1^2\leqslant \frac{||u(0)||_1^2}{1-\epsilon C(q,d)||u(t)||_1^{2q}}.$$
If $\epsilon\leqslant C(q,d)^{-1}2^{4q-1}||u(0)||_1^{-2q}$, we have
\begin{equation}
||u(t)||_1\leqslant C(||u(0)||_1).\label{rnls-p1-bh1}
\end{equation}
Now we give a direct proof of the case $d=2$ and $q=1$, following \cite{brezis1980}. Similar proof also works for  cases $d=1$ and $q\in\mathbb{N}$.
\begin{lemma}  For every $u\in H^2(\mathbb{T}^2)$ with $||u||_1\leqslant1$, we have 
\[||u||_{L^{\infty}}\leqslant C(1+\sqrt{\log(1+||u||_2)}).\]
\label{rnls-p1-lem1}
\end{lemma}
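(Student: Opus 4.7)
I would prove this by the standard Brezis--Gallouet technique: expand $u$ in Fourier series, split the sum into low and high frequencies, estimate each piece by Cauchy--Schwarz using $\|u\|_1$ and $\|u\|_2$ respectively, and optimize the cutoff.

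The starting point is the bound $\|u\|_{L^\infty}\leq \sum_{\dk\in\mathbb{Z}^2}|v_{\dk}|$, where $v_{\dk}$ are the Fourier coefficients of $u$. Fix a parameter $N\geqslant 1$ to be chosen later and split this sum into the low-frequency piece $|\dk|\leqslant N$ and the high-frequency piece $|\dk|>N$. On the low-frequency side, Cauchy--Schwarz after introducing the weight $(1+|\dk|^2)^{1/2}$ gives
\[\sum_{|\dk|\leqslant N}|v_{\dk}|\leqslant\Bigl(\sum_{|\dk|\leqslant N}(1+|\dk|^2)|v_{\dk}|^2\Bigr)^{1/2}\Bigl(\sum_{|\dk|\leqslant N}\frac{1}{1+|\dk|^2}\Bigr)^{1/2}\leqslant \|u\|_1\,\bigl(C\log(2+N)\bigr)^{1/2},\]
where I used the two-dimensional comparison of the sum with $\int_0^N \frac{r\,dr}{1+r^2}\sim \log N$.

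On the high-frequency side, the same trick with the weight $(1+|\dk|^2)$ gives
\[\sum_{|\dk|>N}|v_{\dk}|\leqslant\Bigl(\sum_{|\dk|>N}(1+|\dk|^2)^2|v_{\dk}|^2\Bigr)^{1/2}\Bigl(\sum_{|\dk|>N}\frac{1}{(1+|\dk|^2)^2}\Bigr)^{1/2}\leqslant C\,\|u\|_2\,N^{-1},\]
since in dimension $2$ the tail sum is comparable with $\int_N^\infty \frac{r\,dr}{(1+r^2)^2}\sim N^{-2}$. Combining the two bounds and using $\|u\|_1\leqslant 1$ yields
\[\|u\|_{L^\infty}\leqslant C\sqrt{\log(2+N)}+C\,\|u\|_2\,N^{-1}.\]

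The proof is finished by choosing $N=1+\|u\|_2$, which makes the second term bounded by $C$ and the first term bounded by $C\sqrt{\log(2+\|u\|_2)}\leqslant C'\sqrt{1+\log(1+\|u\|_2)}$, from which the claimed inequality follows after absorbing constants. There is no real obstacle here; the only point that requires care is getting the correct two-dimensional asymptotics of the two lattice sums $\sum_{|\dk|\leqslant N}(1+|\dk|^2)^{-1}$ and $\sum_{|\dk|>N}(1+|\dk|^2)^{-2}$, which is a routine comparison with polar-coordinates integrals and is precisely the place where the dimension $d=2$ is used (in $d=1$ the first sum is bounded, in $d\geqslant 3$ it grows polynomially, and the $\log$-loss would not appear).
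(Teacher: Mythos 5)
Your proof is correct and is exactly the standard Brezis--Gallouet frequency-splitting argument; the paper itself gives no proof but simply cites Lemma 2 of Brezis--Gallouet, which is proved by the same low/high Fourier decomposition with the cutoff optimized at $N\sim 1+\|u\|_2$. Nothing to add.
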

For a proof of this lemma, see  Lemma 2 in \cite{brezis1980}.

\begin{lemma}(Moser's inequality, see e.g. Proposition 3.7 in \cite{pde3})
For $u\in H^2(\mathbb{T}^2)$, we have 
\[|||u|^2u||_2\leqslant C||u||^2_{L^{\infty}}||u||_2.\]
\label{rnls-p1-lem2}
\end{lemma}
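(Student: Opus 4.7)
The plan is to write $f := |u|^2 u = u\cdot u\cdot \bar u$ and estimate $\|f\|_2$ by summing $L^2$ bounds on $f$, $\nabla f$, and $\nabla^2 f$. The Leibniz rule distributes each derivative among the three factors, and in every summand the strategy is to place two of the undifferentiated copies of $u$ or $\bar u$ in $L^\infty$ and to put whatever is left (plain $u$, or one or two derivatives) in $L^2$ or $L^4$. The zeroth- and first-order parts are immediate: pointwise $|f|\leq \|u\|_{L^\infty}^2 |u|$ and $|\nabla f|\leq 3\|u\|_{L^\infty}^2 |\nabla u|$, both of which have $L^2$ norm at most $C\|u\|_{L^\infty}^2 \|u\|_2$.

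For $\nabla^2 f$ the Leibniz expansion produces two kinds of summands. The first kind, where both derivatives fall on the same factor, is bounded pointwise by $\|u\|_{L^\infty}^2 |\nabla^2 u|$, which in $L^2$ is again controlled by $\|u\|_{L^\infty}^2\|u\|_2$. The second kind is schematically $u (\nabla u)(\nabla u)$, for which H\"older gives
$$\|u(\nabla u)^2\|_{L^2}\leq \|u\|_{L^\infty}\|\nabla u\|_{L^4}^2.$$
So the whole estimate reduces to bounding $\|\nabla u\|_{L^4}$ in terms of $\|u\|_{L^\infty}$ and $\|u\|_2$.

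The main obstacle and key step is therefore the two-dimensional Gagliardo--Nirenberg interpolation
$$\|\nabla u\|_{L^4(\mathbb{T}^2)}^2 \leq C\,\|u\|_{L^\infty(\mathbb{T}^2)}\,\|u\|_2.$$
I would prove this by integrating by parts in $\int |\partial_j u|^4\,dx$: write $|\partial_j u|^2 = \partial_j u\,\partial_j \bar u$, move one $\partial_j$ off $\bar u$ onto the surviving factor $|\partial_j u|^2 \partial_j u$, and then apply H\"older with exponents $(\infty,4,4,2)$ to the resulting integrand, placing $u$ in $L^\infty$, two factors of $\partial_j u$ in $L^4$, and a second derivative of $u$ in $L^2$. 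One factor of $\|\nabla u\|_{L^4}^2$ can be absorbed to the left, yielding the claimed interpolation. This step is the only place where two-dimensionality really enters; the rest is just Leibniz plus H\"older. Combining the three pieces gives $\||u|^2 u\|_2 \leq C\|u\|_{L^\infty}^2\|u\|_2$, as desired.
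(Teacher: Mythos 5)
Your proposal is correct and follows essentially the same route as the paper: expand the derivatives of $|u|^2u$ by the Leibniz rule, bound the terms with at most one differentiated factor by $\|u\|_{L^\infty}^2\|u\|_2$, and control the remaining term $u(\nabla u)^2$ via the Gagliardo--Nirenberg interpolation $\|\nabla u\|_{L^4}^2\leqslant C\|u\|_{L^\infty}\|u\|_2$. The only difference is that the paper simply cites Nirenberg for that interpolation while you sketch its standard integration-by-parts proof, which is a fine (and in fact dimension-independent) argument.
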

\begin{proof}
For $u\in H^2(\mathbb{T}^2)$ we have 
\[|\Delta(|u|^2u)|\leqslant C(|u|^2|\Delta u|+|u||\nabla u|^2),\]
and so
\begin{equation}
|||u|^2u||_2\leqslant C||u||^2_{L^{\infty}}||u||_2+C||u||_{L^{\infty}}(\int_{\mathbb{T}^2}|\nabla u|^4 dx)^{1/2}.
\label{rnls-p1-r1}
\end{equation}
Using the Gagliardo-Nirenberg inequality (see \cite{nirenberg1959}), we have 
\begin{equation}(\int_{\mathbb{T}^2}|\nabla u|^4dx)^{1/2}\leqslant C||u||_{L^{\infty}}||u||_2.
\label{rnls-p1-r2}
\end{equation}
Combining (\ref{rnls-p1-r1}) and (\ref{rnls-p1-r2}) we obtain the statement of the lemma.
\end{proof}
Let us denote by $S(t)$ the $L^2$ isometry $S(t)=e^{-it\Delta}$. Then we have 
\[u(t)=S(t)u(0)+i\epsilon\int_0^tS(t-s)|u(s)|^2u(s)ds.\]
Using Lemmas \ref{rnls-p1-lem1} and \ref{rnls-p1-lem2} and the boundness of $H^1$-norm we have 
\[||u(t)||_2\leqslant ||u(0)||_2+C\epsilon\int_0^t||u(s)||_2[1+\log(1+||u(s)||_2)]ds.\]
So
\[||u(t)||_2\leqslant ||u(0)||_2e^{C_1e^{C_2\epsilon t}}.\]
This verifies the statement of Proposition \ref{rnls-p1} in  this case.
\begin{remark}
The same proof also applies to nonlinear Schr\"odinger equations on~$\mathbb{T}^2$ with other  cubic nonlinearities, e.g. the nonlinearities with Hamiltonians of the forms $\mathcal{H}_3=\int |u|^2(u+\bar{u})dx$ and $\mathcal{H}_3^{\prime}=\int u^3+\bar{u}^3dx$.
\end{remark}
 
 For the other cases, more sophisticated theory is needed. We refer the readers to  the theories of the Cauchy problem for the nonlinear Schr\"odinger equations in~\cite{bourgain1993,herr2011}. From these works we know that  for a solution $u(t)$ of Eq. (\ref{rnls3}) with $u(0)\in H^2$,  there exist   $T_1>0$ and $C_1>0$ that depend only on the bound of the $H^1$-norm $||u(t)||_1$ (inequality (\ref{rnls-p1-bh1})) such that for every  $t_0\in[0,\infty)$, we have
 \[||u(t)||_2^2\leqslant ||u(t_0)||_2^2+\epsilon T_1C_1||u(t_0)||_2^2,\quad t\in [t_0,t_0+T_1].\]
 Therefore \[ ||u(t)||_2\leqslant C(||u(0)||_2)e^{\epsilon tC^{\prime}(||u(0)||_2)}.\]
This confirms the assertion of Proposition \ref{rnls-p1}.

\section*{Acknowledgments}The author wants to thank his Ph.D supervisor professor Sergei Kuksin for formulation of the problem and guidance. He also wants to thank the staff and faculty at C.M.L.S of \'Ecole Polytechnique for their support.

\bibliography{rnls2}

\end{document}